\theoremstyle{plain}
\newtheorem{theorem}{Theorem}[section]
\newtheorem{lemma}[theorem]{Lemma}
\theoremstyle{definition}
\numberwithin{equation}{section}
\newcommand{\C}{\mathbb{C}}
\newcommand{\N}{\mathbb{N}}
\newcommand{\R}{\mathbb{R}}
\newcommand{\Z}{\mathbb{Z}}
\newcommand{\Cc}{\mathcal{C}}
\newcommand{\Ec}{\mathcal{E}}
\newcommand{\Hc}{\mathcal{H}}
\newcommand{\Vc}{\mathcal{V}}
\def\gf{\mathfrak{g}}
\newcommand{\Hom}{\operatorname{Hom}}
\newcommand{\SO}{\operatorname{SO}}
\newcommand{\so}{\mathfrak{so}}
\newcommand{\diag}{\operatorname{diag}}
\newcommand{\re}{\operatorname{Re}}
\newcommand{\even}{\operatorname{even}}
\newcommand{\odd}{\operatorname{odd}}
\newcommand{\parity}{\operatorname{parity}}
\newcommand{\temp}{\operatorname{temp}}
\def\1{{\bf1}}
\title[Discrete series embeddings]
{Discrete series representations with non-tempered embedding}
\subjclass[2000]{}
\begin{document}
\date{March 2021}

\begin{abstract}
We give an example of a semisimple symmetric space $G/H$ and an irreducible
representation of $G$ which has multiplicity $1$ in $L^2(G/H)$
and multiplicity $2$ in $C^\infty(G/H)$.
\end{abstract}

\keywords{ Symmetric spaces, Gelfand pairs, multiplicity.}

\author[Kr\"otz]{Bernhard Kr\"{o}tz}
\email{bkroetz@gmx.de}
\address{Institut f\"ur Mathematik, Universit\"at Paderborn,\\ Warburger Stra\ss e 100,
33098 Paderborn}

\author[Kuit]{Job J.~Kuit}
 \email{jobkuit@math.upb.de}
\address{Institut f\"ur Mathematik, Universit\"at Paderborn, \\Warburger Stra\ss e 100,
33098 Paderborn}

\author[Schlichtkrull]{Henrik Schlichtkrull}
\email{schlicht@math.ku.dk}
\address{University of Copenhagen, Department of Mathematics\\Universitetsparken 5,
DK-2100 Copenhagen \O}

\maketitle
\section{Introduction}
Let $G$ be a real reductive group and $X=G/H$ an attached symmetric space. Let further $V$
be a Harish-Chandra module and $V^\infty$ its smooth Fr\'echet completion of moderate growth. We say that
$V$ is $H$-spherical provided
$$\Hom_G (V^\infty, C^\infty(X))\neq 0\, .$$
By Frobenius reciprocity we have $ \Hom_G (V^\infty, C^\infty(X))\simeq (V^{-\infty})^H$ with $V^{-\infty}= (V^\infty)'$ the strong dual
of $V^\infty$. We recall that the full multiplicity space $(V^{-\infty})^H$ is finite dimensional.

\par Inside of $C^\infty(X)$ we find natural invariant subspaces: the Harish-Chandra Schwartz space $\Cc(X)\subset L^2(X)\cap C^\infty(X)$ and the
space $C_{\temp}^\infty(X)$ of tempered functions which lie in $L^{2+\epsilon}(X)$ for all $\epsilon >0$. Accordingly
we obtain subspaces
$$(V^{-\infty})_{\rm temp}^H, (V^{-\infty})_{\rm disc}^H\subset  (V^{-\infty})^H$$
 corresponding to
 $ \Hom_G (V^\infty, C^\infty_{\rm temp} (X))$ and  $ \Hom_G (V^\infty, \Cc(X))$ respectively.
 The objective of this paper is to provide an example where
 $$0\neq (V^{-\infty})_{\rm disc}^H=(V^{-\infty})_{\rm temp}^H \subsetneq (V^{-\infty})^H\, .$$
To be more specific this happens for $X$ the $n$-dimensional one-sheeted hyperboloid which is homogeneous for the connected
 Lorentzian group $G=\SO_0(n,1)$. Let us briefly introduce the standard notions.
 \subsection{Notation} Let $n\ge 3$ and let $G=\SO_0(n,1)$ be the identity component of the special Lorentz group $\SO(n,1)$.
We denote by $H$ the stabilizer of
$$x_0:=(1,0,\dots,0)\in \R^{n+1}$$ in $G$.
The entry in the lower right corner of any matrix in $\SO(n,1)$ is non-zero, and $\SO_0(n,1)$
consists of those matrices for which this entry is positive. From this fact we see that
$H$ is the connected subgroup
$$
H=\begin{pmatrix} 1&0\\0&\SO_0(n\!-\!1,1)\end{pmatrix}\subset G.
$$

The group $G$ acts
transitively on the hyperboloid
$$ X:=\{x\in\R^{n+1}\mid x_1^2+\dots +x_n^2-x_{n+1}^2=1\},$$
and
the homogeneous space
$$X=G/H=\SO_0(n,1)/\SO_0(n-1,1)$$
is a symmetric space. The corresponding involution $\sigma$ of $G$ is
given by conjugation with  the diagonal matrix $\diag(-1,1,\dots,1)$. The subgroup $G^\sigma$ of
$G$ of $\sigma$-fixed elements is the stabilizer of $\R x_0$.
This subgroup has two components, one of which is $H$.
For our purpose it is important to use $H$ rather than $G^\sigma$. The pairs
$(G,H)$ and $(G,G^\sigma)$ differ by the fact that $(G,G^\sigma)$ is a Gelfand pair,
whereas $(G,H)$ is not. In fact it has been shown by
van Dijk \cite{vanDijk} that $X$ is the only symmetric space
of rank one, which is obtained as the homogeneous space of a Gelfand pair.

The regular representation of $G$ on $C^\infty(X)$ decomposes as the direct sum
$$C^\infty(X)=C_{\even}^\infty(X)\oplus C_{\odd}^\infty(X)$$
of the invariant subspaces
of functions that are even or odd with respect to
the $G$-equivariant symmetry $x\mapsto -x$.
The restriction of the regular representation  to $C_{\even}^\infty(X)$
is isomorphic to the regular representation on $C^\infty(G/G^\sigma)$,
and the non-Gelfandness of $(G,H)$ is therefore caused by the presence of the
odd functions.

\subsection{Main results}

The Lorentzian manifold $X$ carries the $G$-invariant Laplace-Beltrami operator $\Delta$, which is
obtained as the radial part of
$$\Box:=-\frac{\partial^2}{\partial x_1^2}-\dots-\frac{\partial^2}{\partial x_n^2}+\frac{\partial^2}{\partial x_{n+1}^2}$$
on $\R^{n+1}$. That is, for $f\in C^\infty(X)$ we define $\Delta f\in C^\infty(X)$
as $(\Box \tilde f)|_ X$ where $\tilde f$ is any extension of $f$
to a function, homogeneous of degree $0$, on some open neighborhood of $X$ in $\R^{n+1}$.

Let
$$\rho:=\tfrac12(n-1)$$
and for each $\lambda\in\C$ let $\Ec_\lambda(X)$ be the eigenspace
$$\Ec_\lambda(X)=\{f\in C^\infty(X)\mid \Delta f=(\lambda^2-\rho^2)f\}.$$
The Laplace-Beltrami operator is a scalar multiple of the Casimir element
associated to the Lie group $G$, and hence every irreducible subspace $\Vc$ of $C^\infty(X)$ is contained
in $\Ec_\lambda(X)$ for some $\lambda\in\C$. Apart from the sign, the scalar $\lambda$ is uniquely determined
by the infinitesimal character of $\Vc$. Conversely, since $X=G/H$ is rank one, $\pm\lambda$ determines
the infinitesimal character.

Let
$$\Ec^{\even}_\lambda(X)=\Ec_\lambda(X)\cap C_{\even}^\infty(X),\quad \Ec^{\odd}_\lambda(X)=\Ec_\lambda(X)\cap C_{\odd}^\infty(X).$$
We can now state our main results.

The manifold $X$ carries a $G$-invariant measure, which is unique up to scalar multiplication.
We denote by $L^2(X)$ the associated $G$-invariant space of square integrable functions.

\begin{theorem} \label{thm discrete series}
Let $\lambda\in\C$ with $\re\lambda> 0$. The intersections $\Ec^{\even}_\lambda(X) \cap L^2(X)$ and
$\Ec^{\odd}_\lambda(X) \cap L^2(X)$ are either zero or irreducible. Moreover,
\begin{align*}
\Ec^{\even}_\lambda(X) \cap L^2(X)\neq 0\, &\Leftrightarrow\,  \lambda\in \rho+1+2\Z\\
\Ec^{\odd}_\lambda(X) \cap L^2(X)\neq 0\, &\Leftrightarrow \,\lambda\in \rho+2\Z .
\end{align*}
\end{theorem}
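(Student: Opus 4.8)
The plan is to separate variables. Parametrising $X$ by $x_{n+1}=\sinh t$ and $(x_1,\dots,x_n)=\cosh t\,\omega$ with $t\in\R$, $\omega\in S^{n-1}$, the invariant measure becomes $\cosh^{n-1}t\,dt\,d\omega$, and for $f=u(t)Y(\omega)$ with $Y$ a spherical harmonic of degree $\ell$ on $S^{n-1}$ the equation $\Delta f=(\lambda^2-\rho^2)f$ becomes the ordinary differential equation
$$u''+(n-1)\tanh t\,u'+\Big(\frac{\ell(\ell+n-2)}{\cosh^2 t}-(\lambda^2-\rho^2)\Big)u=0$$
on $\R$. Since $\Delta$ is a scalar multiple of the Casimir element, $\Ec_\lambda(X)\cap L^2(X)$ is a closed $G$-submodule of $L^2(X)$, and intersecting it with the degree-$\ell$ isotypic subspace $L^2(\R,\cosh^{n-1}t\,dt)\otimes\Hc_\ell$ reduces the whole question to: for which $\ell$ does the radial equation admit a nonzero solution in $L^2(\R,\cosh^{n-1}t\,dt)$, and of which parity in $t$? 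The symmetry $x\mapsto-x$ is $(t,\omega)\mapsto(-t,-\omega)$, and since $\Hc_\ell$ has parity $(-1)^\ell$ under $\omega\mapsto-\omega$, the function $u(t)Y(\omega)$ lies in $\Ec^{\even}_\lambda(X)$ exactly when $u$ has parity $(-1)^\ell$ and in $\Ec^{\odd}_\lambda(X)$ exactly when $u$ has parity $(-1)^{\ell+1}$.

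First I would bring the radial equation into standard form. With $s=\tanh t$, then $u=(1-s^2)^{(\rho+\lambda)/2}w$, and then $z=s^2$, it becomes the Gauss hypergeometric equation with $c=\tfrac12$ and $\{a,b\}=\big\{\tfrac12(\lambda+\rho+\ell),\ \tfrac12(\lambda-\rho-\ell+1)\big\}$; the solutions even and odd in $t$ are $(1-s^2)^{(\rho+\lambda)/2}\,{}_2F_1(a,b;\tfrac12;s^2)$ and $s(1-s^2)^{(\rho+\lambda)/2}\,{}_2F_1(a+\tfrac12,b+\tfrac12;\tfrac32;s^2)$. Because $\re\lambda>0$, as $t\to+\infty$ the two solutions behave like $e^{(-\rho\pm\lambda)t}$, and only the faster-decaying one, $\sim e^{(-\rho-\lambda)t}$, is square-integrable against $\cosh^{n-1}t\,dt$ — the other, even if it decays, is not, because the measure grows like $e^{(n-1)t}$ — and by the $t\mapsto-t$-symmetry of the equation the same holds at $t\to-\infty$. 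Hence a solution of definite parity is globally $L^2$ iff it is $L^2$ at $t=+\infty$, and the connection formula at $z=1$ shows that the even solution has its non-$L^2$ component there with coefficient a nonzero multiple of $1/(\Gamma(a)\Gamma(b))$; as $\re a>0$ this vanishes iff $b\in\Z_{\le0}$, i.e. iff $\lambda-\rho-\ell+1\in2\Z_{\le0}$. Likewise the odd solution is $L^2$ iff $b+\tfrac12\in\Z_{\le0}$, i.e. iff $\lambda-\rho-\ell+2\in2\Z_{\le0}$. (For $n$ odd the parameter $\lambda$ is forced to be an integer and the connection formula is logarithmic; one then checks the same statement directly from the terminating/non-terminating dichotomy of the hypergeometric series, or from the Wronskian of the two recessive solutions viewed as a meromorphic function of $\lambda$.)

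Now I would assemble the bookkeeping. Combining the two $L^2$-criteria with the parity rule, $\Hc_\ell$ contributes a copy to $\Ec^{\even}_\lambda(X)\cap L^2(X)$ precisely when $\ell$ is even with $\lambda-\rho-\ell+1\in2\Z_{\le0}$ or $\ell$ is odd with $\lambda-\rho-\ell+2\in2\Z_{\le0}$; either alternative forces $\lambda-\rho$ to be an odd integer. Conversely, for any odd integer $\lambda-\rho$ with $\re\lambda>0$ the first alternative holds for the even $\ell$ in $\{\lambda-\rho+1,\lambda-\rho+3,\dots\}\cap\Z_{\ge0}$ and the second for the odd $\ell$ in $\{\lambda-\rho+2,\lambda-\rho+4,\dots\}\cap\Z_{\ge0}$, and these two sets interlace to give the full ray $\{\ell\in\Z_{\ge0}:\ell\ge\max(0,\lambda-\rho+1)\}$. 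Thus $\Ec^{\even}_\lambda(X)\cap L^2(X)\neq0$ iff $\lambda\in\rho+1+2\Z$, and then it equals $\bigoplus_{\ell\ge\ell_0}\Hc_\ell$ with each $\Hc_\ell$ of multiplicity one and $\ell_0=\max(0,\lambda-\rho+1)$. The same computation with the roles of "even in $t$" and "odd in $t$" exchanged gives $\Ec^{\odd}_\lambda(X)\cap L^2(X)\neq0$ iff $\lambda\in\rho+2\Z$, again a multiplicity-one sum $\bigoplus_{\ell\ge\ell_0'}\Hc_\ell$ over a ray.

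It then remains to prove irreducibility. Write $\gf=\so(n,1)=\kf\oplus\pf$ with $\pf\cong\C^n$ the standard $\SO(n)$-module; since every $K$-type appearing is one of the symmetric traceless tensor modules $\Hc_\ell$, the image of $\pf\otimes\Hc_\ell$ in the module lies in $\Hc_{\ell-1}\oplus\Hc_\ell\oplus\Hc_{\ell+1}$. Therefore every $(\gf,K)$-submodule of the multiplicity-one module $\bigoplus_{\ell\ge\ell_0}\Hc_\ell$ has the form $\bigoplus_{\ell\in T}\Hc_\ell$; were such a submodule proper and nonzero, then for some $\ell$ the map $\pf\colon\Hc_\ell\to\Hc_{\ell+1}$ would vanish, so that $\bigoplus_{\ell_0\le m\le\ell}\Hc_m$ would be a finite-dimensional $(\gf,K)$-submodule, hence a finite-dimensional unitary representation of $G$, hence trivial, hence the constant functions — which do not lie in $L^2(X)$ since $X$ has infinite volume. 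This contradiction shows the two eigenspace intersections are irreducible whenever nonzero. The most delicate part is the square-integrability analysis at the two ends together with the parity bookkeeping: one must keep track of which indicial exponent actually yields an $L^2$ solution (the faster-decaying one, because of the growth of the measure), and the degeneration of the hypergeometric connection formula for $n$ odd has to be treated separately.
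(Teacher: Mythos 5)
Your proposal is correct in substance and reaches the same basic reduction (separation of variables, a radial hypergeometric equation, parity bookkeeping), but it departs from the paper's proof at two genuine points, and one step needs an extra line. First, where the paper substitutes $x=(1+e^{2t})^{-1}$ and then needs a separate Gegenbauer--polynomial lemma to determine the parity of the $L^2$ solution (Lemma \ref{lemma even/odd eigen}) together with N\"orlund's logarithmic second solution for $n$ odd (Lemma \ref{lemma square integrability}), you use the quadratic substitution $z=\tanh^2t$ with $c=\tfrac12$, so that the two canonical solutions at $z=0$ are exactly the even and odd ones; this lets you read off parity and the $L^2$ dichotomy simultaneously from the single endpoint $z=1$, and it gives multiplicity one for free, since the two parities cannot both satisfy their $L^2$ criterion and a mixed combination is $L^2$ only if both its parity components are. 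Your parenthetical worry about the logarithmic degeneration for $n$ odd can be dispensed with: what you need is only the Gauss limit $\lim_{z\to1^-}(1-z)^{a+b-c}F(a,b;c;z)=\Gamma(c)\Gamma(a+b-c)/(\Gamma(a)\Gamma(b))$, valid for all $\re(a+b-c)>0$ with no integrality restriction, which is precisely what the paper invokes. Second, your irreducibility argument is soft where the paper's is computational: the paper derives explicit nonvanishing raising and lowering coefficients $A_l,B_l$ from Jacobi-polynomial recurrences, while you combine multiplicity one of the $K$-types, the selection rule that $\pf\cdot\Hc_\ell$ meets only $\Hc_{\ell-1},\Hc_\ell,\Hc_{\ell+1}$, and the absence of nontrivial finite-dimensional unitary representations of $G$. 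This works, but your assertion that a proper nonzero submodule forces some \emph{upward} map $\Hc_\ell\to\Hc_{\ell+1}$ to vanish is not immediate: the submodule could be an upper ray $\bigoplus_{\ell\ge\ell_1}\Hc_\ell$, in which case what vanishes directly is the downward transition $\Hc_{\ell_1}\to\Hc_{\ell_1-1}$. You must invoke unitarity of the module inside $L^2(X)$ (the two transition maps between adjacent $K$-types are adjoint to one another up to sign, hence vanish together) to convert this into the vanishing of $\Hc_{\ell_1-1}\to\Hc_{\ell_1}$ and so obtain your finite-dimensional submodule. With that line added your argument is complete, and it is arguably more robust than the explicit recurrence computation, at the price of using unitarity twice.
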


\begin{theorem} \label{thm small lambda}
For every $0<\lambda<\rho$ with $\lambda\in \rho-\N$ the $G$-representations
$\Ec^{\even}_\lambda(X)$
and $\Ec^{\odd}_\lambda(X)$ are irreducible and infinitesimally equivalent. Moreover in this case,
\begin{enumerate}
\item
if $\lambda-\rho$ is even then $\Ec^{\odd}_\lambda(X)$ is contained in $L^2(X)$
and $\Ec^{\even}_\lambda(X)$ is non-tempered, and
\item
if $\lambda-\rho$ is odd then $\Ec^{\even}_\lambda(X)$ is contained in $L^2(X)$
and $\Ec^{\odd}_\lambda(X)$ is non-tempered.
\end{enumerate}
\end{theorem}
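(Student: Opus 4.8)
The plan is to reduce the whole statement to the analysis of a single second‑order ordinary differential equation by decomposing into $K$‑isotypic components, and then to extract both the representation‑theoretic structure and the $L^2$‑ and temperedness properties from the classical asymptotics of its hypergeometric solutions. Write $K=\SO(n)$ for the maximal compact subgroup. Then $X\cong S^{n-1}\times\R$ via $x=(u\cosh s,\sinh s)$ with $u\in S^{n-1}$ and $s\in\R$, the $G$‑invariant measure is a multiple of $\cosh^{n-1}s\,ds\,d\omega(u)$, and the symmetry $x\mapsto-x$ corresponds to $(u,s)\mapsto(-u,-s)$. Denoting by $\Hc_m$ the space of spherical harmonics of degree $m$ on $S^{n-1}$, the equation $\Delta f=(\lambda^2-\rho^2)f$ on the $\Hc_m$‑isotypic component becomes a second‑order ODE in $s$ with a two‑dimensional solution space, which after the substitution $w=\tanh^2 s$ is a Gauss hypergeometric equation. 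Since that ODE is invariant under $s\mapsto-s$, the parity condition forced by membership in $\Ec^{\even}_\lambda(X)$, namely $g(-s)=(-1)^m g(s)$, or in $\Ec^{\odd}_\lambda(X)$, namely $g(-s)=(-1)^{m+1}g(s)$, cuts the solution space down to one dimension. Hence both $\Ec^{\even}_\lambda(X)$ and $\Ec^{\odd}_\lambda(X)$ contain every $\Hc_m$, $m\ge0$, with multiplicity exactly one; in particular they are admissible with identical $K$‑spectrum.

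For irreducibility and equivalence I would argue as follows. Each of $\Ec^{\even}_\lambda(X)$ and $\Ec^{\odd}_\lambda(X)$ is a nonzero admissible $G$‑representation, so its underlying Harish‑Chandra module has a finite composition series, and every composition factor is an irreducible $(\gf,K)$‑module with Casimir eigenvalue $\lambda^2-\rho^2$ and $K$‑types among the $\Hc_m$. By the classification of irreducible $(\gf,K)$‑modules of $\SO_0(n,1)$ whose $K$‑types lie among the $\Hc_m$ --- the trivial module, the finite‑dimensional ones, the spherical principal series and their constituents --- the only one with Casimir eigenvalue in the open interval $(-\rho^2,0)$, which is where $\lambda^2-\rho^2$ lies since $0<\lambda<\rho$, is the complementary series representation $\pi_\lambda$; in particular $\pi_\lambda$ is irreducible here. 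As $\Hc_0$ has multiplicity one in $\pi_\lambda$ and multiplicity one in $\Ec^{\even}_\lambda(X)$, there can be only one composition factor, so $\Ec^{\even}_\lambda(X)\cong\pi_\lambda$ as $(\gf,K)$‑modules, and the same for $\Ec^{\odd}_\lambda(X)$. Hence the two are infinitesimally equivalent; admissibility together with moderate growth inside $C^\infty(X)$ and the Casselman--Wallach theorem then upgrade this to an isomorphism of irreducible Fréchet representations.

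Finally, the $L^2$‑versus‑non‑tempered dichotomy. From the measure $\cosh^{n-1}s\,ds\,d\omega$ and the indicial roots $\lambda-\rho$ and $-(\lambda+\rho)$ of the radial ODE as $s\to+\infty$, a $K$‑finite eigenfunction lies in $L^2(X)$ exactly when for every $m$ its $\Hc_m$‑radial component is the recessive solution (behaving like $e^{-(\lambda+\rho)|s|}$), and it lies in $C^\infty_{\temp}(X)$ under the same condition, since for $0<\lambda<\rho$ the dominant behaviour $e^{(\lambda-\rho)s}$ already fails to lie in $L^{2+\epsilon}(X)$ for small $\epsilon$. Up to a scalar the solution even in $s$ equals $(1-w)^{(\rho-\lambda)/2}F(\tfrac{m+\rho-\lambda}{2},\tfrac{1-m-\rho-\lambda}{2};\tfrac12;w)$ and the one odd in $s$ equals $\tanh s\,(1-w)^{(\rho-\lambda)/2}F(\tfrac{m+\rho-\lambda+1}{2},\tfrac{2-m-\rho-\lambda}{2};\tfrac32;w)$, where $F$ is the Gauss hypergeometric function and $w=\tanh^2 s$. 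The Gauss connection formula at $w=1$ exhibits the coefficient of the dominant term as a ratio of Gamma functions which vanishes precisely when $\tfrac12(1-m-(\rho-\lambda))$, respectively $\tfrac12(2-m-(\rho-\lambda))$, is a non‑positive integer --- that is, exactly when $m+(\rho-\lambda)$ is odd, respectively even. Combining with the parity matching from the first step --- on $\Hc_m$ the radial component of $\Ec^{\even}_\lambda(X)$ is the even‑in‑$s$ solution when $m$ is even and the odd‑in‑$s$ solution when $m$ is odd, whereas for $\Ec^{\odd}_\lambda(X)$ it is the other way --- one finds: when $\rho-\lambda$ is odd every radial component of $\Ec^{\even}_\lambda(X)$ is recessive, so $\Ec^{\even}_\lambda(X)\subset L^2(X)$, while the $\Hc_0$‑component of $\Ec^{\odd}_\lambda(X)$ is dominant, so $\Ec^{\odd}_\lambda(X)$ is non‑tempered; and the two roles are exchanged when $\rho-\lambda$ is even. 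This is exactly the case distinction (1)--(2).

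The hard part is this last step: one must determine the Gamma‑factor in the dominant‑term coefficient precisely enough to see that it vanishes simultaneously for all $m$ of the relevant parity and for no $m$ of the other parity, which requires fixing the correct normalizations of the two canonical solutions and carefully tracking how $m$ and $\lambda$ enter the hypergeometric parameters; it is precisely the parity twist $(-1)^m$ that converts the $m$‑dependent vanishing of the connection coefficient into a congruence on $\rho-\lambda$ uniform over the whole $K$‑spectrum. When $n$ is odd one has $\lambda=\rho-j\in\N$, the two exponents of the hypergeometric equation at $w=1$ differ by a positive integer and the connection formula degenerates (the relevant hypergeometric series becomes a polynomial divisible by $(1-w)^{\lambda}$); one should verify that the vanishing criterion for the dominant coefficient survives this degeneration. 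A subsidiary technical point, used above, is that $\Ec^{\even}_\lambda(X)$ and $\Ec^{\odd}_\lambda(X)$ are the moderate‑growth globalizations of their Harish‑Chandra modules, so that "irreducible" is meant in the topological sense; this follows from admissibility and the Casselman--Wallach theorem.
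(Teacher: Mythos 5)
Your overall strategy coincides with the paper's: reduce to the radial hypergeometric equation on each $K$-isotypic component $\Hc_m$, observe that the parity constraint cuts each two-dimensional solution space down to a line so that both $\Ec^{\even}_\lambda(X)$ and $\Ec^{\odd}_\lambda(X)$ carry every $\Hc_m$ with multiplicity one, and then read off square-integrability versus non-temperedness from the asymptotics at infinity. You diverge in two places. For irreducibility and equivalence you appeal to the classification of all irreducible $(\gf,K)$-modules of $\SO_0(n,1)$ with spherical $K$-types together with the fact that the Casimir eigenvalue $\lambda^2-\rho^2\in(-\rho^2,0)$ singles out the complementary series; combined with the multiplicity-one count of $\Hc_0$ this does work, but it imports a nontrivial external classification (needed in particular to exclude hypothetical composition factors not containing $\Hc_0$, which Kostant's theorem alone does not reach). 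The paper instead proves irreducibility of the $L^2$-constituent by hand, using the element $T=E_{n+1,1}+E_{1,n+1}$ as a ladder operator on the zonal functions $h_j\varphi_{\lambda,j}$ and the Jacobi-polynomial recurrences to show the raising/lowering coefficients are nonzero, and only then invokes Kostant's theorem plus the multiplicity-one $K$-spectrum to transfer irreducibility and equivalence to the other parity class; this is entirely self-contained. Your treatment of the dichotomy via the Gauss connection coefficient at $w=1$ for the even and odd radial solutions is correct as far as it goes, and your parity bookkeeping reproduces the case distinction (1)--(2) exactly.

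The one step you flag but do not carry out --- the degenerate case $n$ odd, where $\lambda=c-a-b\in\N$ and the connection formula acquires logarithms --- is a genuine loose end, though not a fatal one. It can be closed: in the degenerate expansion the coefficient of the dominant $(1-w)^0$ branch is still $\Gamma(c)\Gamma(c-a-b)/\bigl(\Gamma(c-a)\Gamma(c-b)\bigr)$, and when it vanishes the parameter $b$ lies in $-\N$, the series terminates, and the resulting polynomial (being a log-free solution vanishing at $w=1$) is forced to be proportional to the exponent-$\lambda$ solution, hence recessive. The paper sidesteps the degeneracy altogether: for the recessive direction it observes that the terminating series is a Gegenbauer polynomial $P^{(\lambda,\lambda)}_l(\tanh t)$ times $(\cosh t)^{-\lambda-\rho}$, which is manifestly in $L^2$ and whose parity $(-1)^l$ is immediate, while for the non-tempered direction it exhibits an explicit second solution ($\varphi_{-\lambda,j}$ for $n$ even, N\"orlund's logarithmic solution for $n$ odd) growing like $(\cosh t)^{\lambda-\rho}$. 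Adopting that normalization would let you avoid the connection-formula degeneration entirely.
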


For $n\ge 4$ we have $\rho>1$ and it follows that there exists at least one discrete series representation
for $X=G/H$ which has multiplicity
$1$ in $C^\infty_{\temp}(X)$, but for which the underlying Harish-Chandra module has multiplicity $2$ in $C^\infty(X)$.

The complete Plancherel decomposition for $\SO_0(n,1)/\SO_0(n-1,1)$ is
given in \cite{LNR}. However, this is not needed for the proof of our theorems.

\section{Proof of the main results}
The proof of the two main theorems is divided into several parts. We begin with the analysis on  $K$-types.
\subsection{$K$-types}\label{subsection K-types}

Let $K\subset G$ be the stabilizer of $e_{n+1}=(0,\dots,0,1)\in\R^{n+1}$,
then $K\simeq \SO(n)$ is a maximal compact subgroup of $G$.

We are going to use the diffeomorphism $S^{n-1}\times\R\overset{\sim}{\to} X$  given by
\begin{equation}\label{eq polar coords}
(y,t)\mapsto (y_1\cosh t,\dots,y_n\cosh t,\sinh t)\in X
\end{equation}
where $y=(y_1,\dots,y_n)\in S^{n-1}$ and $t\in\R$. With the natural action of
$\SO(n)$ on $S^{n-1}$ the parameter dependence on $y$ is $K$-equivariant.

For each $j\in\N_0$ we denote by $\Hc_j\subset C^\infty(S^{n-1})$ the space of spherical harmonics of degree $j$.
We recall that by definition $\Hc_j$ consists of the restrictions to $S^{n-1}$ of all harmonic polynomials on $\R^n$,
homogeneous of degree $j$.
Equivalently, $\Hc_j$ can be defined as the eigenspace
$$\Hc_j:=\{h\in C^\infty(S^{n-1})\mid \Delta_K h= -j(j+n-2) h\},$$
where $\Delta_K$ is the angular part of the $n$-dimensional Laplacian
$$\frac{\partial^2}{\partial y_1^2}+\dots+\frac{\partial^2}{\partial y_n^2}.$$
Each $\Hc_j$
is an irreducible $\SO(n)$-invariant finite dimensional subspace of $C^\infty(S^{n-1})$,
and the sum $\oplus_{j=0}^\infty \Hc_j$ of these subspaces is dense in $C^\infty(S^{n-1})$.

It follows that the space $C_K^\infty(X)$ of $K$-finite functions $f\in C^\infty(X)$ is spanned by all
functions given in the coordinates $(y,t)$ by
$$f(y,t)=h(y)\varphi(t),$$
where $h\in C^\infty(S^{n-1})$ is a spherical harmonic, and $\varphi\in\C^\infty(\R)$. Thus
$$C_K^\infty(X)\underset{K}{\simeq} \oplus_{j=0}^\infty \big(\Hc_j\otimes C^\infty(\R)\big).$$

Being homogeneous of degree $j$, the spherical harmonics $h\in\Hc_j$ satisfy $h(-y)=(-1)^jh(y)$ for $y\in S^{n-1}$. Therefore
$$C_K^\infty(X)_{\even}\underset{K}{\simeq} \oplus_{j=0}^\infty \big(\Hc_j\otimes C^\infty_{\parity(j)}(\R)\big)$$
where $\parity(j)$ denotes the parity $\even$ or $\odd$ of $j$.
Likewise
$$C_K^\infty(X)_{\odd}\underset{K}{\simeq} \oplus_{j=0}^\infty \big(\Hc_{j}\otimes C^\infty_{\parity(j+1)}(\R)\big).$$

\subsection{Eigenspaces}\label{subsection Eigenspaces}

With respect to the coordinates \eqref{eq polar coords} on $X$ we have (see \cite[p. 455]{Rossmann})
$$
\Delta=\frac{\partial^2}{\partial t^2}+2\rho\tanh t\, \frac{\partial}{\partial t}-\frac1{\cosh^2t}\,\Delta_K.
$$
The $K$-finite
eigenfunctions for $\Delta$ belong to $C^\infty(X)$, and they can be determined as follows.
For each  $j\in\N_0$ we let
$$\Ec_{\lambda,j}(X)=\Ec_\lambda(X)\cap \big(\Hc_j\otimes C^\infty(\R)\big).$$
By separating the variables $y$ and $t$ we see that $\Ec_{\lambda,j}$ is spanned by the
functions $f(y,t)=h(y)\varphi(t)$ for which $h\in \Hc_j$
and
\begin{equation}\label{ode for varphi}
\Big(\frac{d^2}{d t^2}+2\rho\tanh t\, \frac{d}{d t}+\frac{j(j+n-2)}{\cosh^2t}\Big)\varphi=(\lambda^2-\rho^2)\varphi.
\end{equation}
This differential equation is invariant under sign change of $t$.
The solution with $\varphi(0)=1$ and $\varphi'(0)=0$ is even, and
the solution with $\varphi(0)=0$ and $\varphi'(0)=1$ is odd.
Thus the solution space decomposes as the direct sum of the
one-dimensional subspaces of even and odd solutions, and we have
$$
\Ec_{\lambda,K}^{\even}(X)=\oplus_{j=0}^\infty \,\Ec_{\lambda,j}^{\even}(X), \qquad
\Ec_{\lambda,K}^{\odd}(X)=\oplus_{j=0}^\infty \,\Ec_{\lambda,j}^{\odd}(X)
$$
where $\Ec_{\lambda,j}^{\even}(X)\underset{K}{\simeq} \Ec_{\lambda,j}^{\odd}(X)\underset{K}{\simeq} \Hc_j$ are equivalent
irreducible $K$-types for each $j$.

\subsection{Hypergeometric functions}\label{subsection Hypergeometric functions}

In fact \eqref{ode for varphi} can be transformed into a standard equation of special function theory.
We first prepare for the anticipated asymptotic behavior af $\varphi$ by
substituting $\Phi(t)=(\cosh t)^{\lambda+\rho}\,\varphi(t)$.
This leads to the following equation for
$\Phi(t)$
\begin{equation}\label{ode for Phi}
\Phi''(t)-2\lambda\tanh t\,\Phi'(t)-ab\,(1-\tanh^2\! t)\,\Phi(t)=0,
\end{equation}
where $a=\lambda+\rho+j$ and $b=\lambda-\rho+1-j$.

Next we change variables. With
$$x=\frac12(1-\tanh t)=(1+e^{2t})^{-1}\in (0,1)$$
we replace the limits $t=\infty$ and  $t=-\infty$ by $x=0$ and $x=1$, respectively.
We write $\Phi(t)=F(x)$, so that
$$\varphi(t)=(\cosh t)^{-\lambda-\rho} F((1+e^{2t})^{-1}).$$
Since  $\tanh t=1-2x$ and $1-\tanh^2\!t=4x(1-x)$ this gives
$$(x')^2F''(x)+x''F'(x)-2\lambda(1-2x)\,F'(x)x'-ab\,4x(1-x)F(x)=0,$$
and
since
$$x'=-2x(1-x), \qquad x''=4x(1-x)(1-2x),$$
we arrive at the following equation for the function $F(x)$
\begin{equation}\label{ode for F}
x(1-x)F''(x)+(\lambda+1)(1-2x) F'(x)-abF(x)=0\,.
\end{equation}

Recall the hypergeometric function $F(a,b;c;x)={}_2F_1(a,b;c;x)$, which with the notation
$(a)_m:=\prod_{k=0}^{m-1}(a+k)$ is defined by the Gauss series
$$
F(a,b;c;x)=\sum_{m=0}^\infty
\frac{(a)_m(b)_m}{m!\,(c)_m}\,x^m
$$
for
$x\in\C$ with $|x|<1$, for all $a,b,c\in\C$ except $c\in -\N_0$.
It solves Euler's hypergeometric differential equation
\begin{equation}\label{hypergeometric equation}
x(1-x)\,w''+(c-(a+b+1)x)\,w'-ab\, w =0.
\end{equation}
The function $F(a,b;c;x)$ is analytic at $x=0$ with the value $1$, and unless $c$ is an integer
it is the unique solution with this property.

With $a=\lambda+\rho+j$ and $b=\lambda-\rho+1-j$ as above we have $a+b+1=2(\lambda+1)$.
By comparing \eqref{ode for F} and \eqref{hypergeometric equation}
we conclude that for each $\lambda\notin-\N$ the function
$$\varphi_{\lambda,j}(t):=(\cosh t)^{-\lambda-\rho} \,F(\lambda+\rho+j,\lambda-\rho+1-j;1+\lambda;(1+e^{2t})^{-1})$$
solves \eqref{ode for varphi}.

\subsection{$L^2$-behavior}\label{subsection L2-behavior}
In the coordinates $(y,t)$ an invariant measure on $X$ is given by
$$\cosh^{n-1}t\,dt\,dy$$
where $dt$ and $dy$ are invariant measures on $\R$ and $S^{n-1}$, respectively.
Hence a function $f(y,t)=h(y)\varphi(t)$ is square integrable if and only if
$$\int_\R |\varphi(t)|^2 \cosh^{n-1}\!t\,dt <\infty.$$

Let $\check\varphi_{\lambda,j}(t)=\varphi_{\lambda,j}(-t)$.
By symmetry this function also solves \eqref{ode for varphi}, and it belongs to
$L^2(\R,\cosh^{n-1}\!t\,dt)$ if and only if $\varphi_{\lambda,j}$ does.

\begin{lemma}\label{lemma sq int}
Let $\re\lambda>0$ and $j\in\N_0$.
\begin{enumerate}
\item\label{lemma sq int (1)} If $j\in\lambda-\rho+\N$ then
$\varphi_{\lambda,j}\in L^2(\R,\cosh^{n-1}\!t\,dt).$
\item\label{lemma sq int (2)}
If $j\notin\lambda-\rho+\N$ then $\varphi_{\lambda,j}$ and $\check\varphi_{\lambda,j}$
are linearly independent, and for a sufficiently small $\epsilon>0$ no non-trivial linear combination belongs to
$L^{2+\epsilon}(\R,\cosh^{n-1}\!t\,dt)$.
\end{enumerate}
\end{lemma}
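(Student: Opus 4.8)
The plan is to read off the asymptotics of $\varphi_{\lambda,j}(t)$ as $t\to\pm\infty$ from the behaviour of ${}_2F_1$ near the singular points $x=0$ and $x=1$, and then to settle the integrability questions by comparing real parts of exponents; throughout I use $n-1=2\rho$, so that a function $h(y)\varphi(t)$ with $h\ne0$ lies in $L^p(X)$ exactly when $\int_\R|\varphi(t)|^p\cosh^{n-1}t\,dt<\infty$. As $t\to+\infty$ we have $x=(1+e^{2t})^{-1}\to0$, hence $F(\lambda+\rho+j,\lambda-\rho+1-j;1+\lambda;x)\to1$ and $\varphi_{\lambda,j}(t)\sim 2^{\lambda+\rho}e^{-(\lambda+\rho)t}$; consequently $|\varphi_{\lambda,j}(t)|^p\cosh^{n-1}t$ decays at $+\infty$ like $e^{(-p\re\lambda+(2-p)\rho)t}$ for every $p\ge2$, so the $+\infty$-end never obstructs integrability of $\varphi_{\lambda,j}$ itself. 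Its reflection $\check\varphi_{\lambda,j}$ is instead well behaved there: from the same computation $\check\varphi_{\lambda,j}(t)\sim 2^{\lambda+\rho}e^{(\lambda+\rho)t}$ as $t\to-\infty$, with correction $O(e^{(\re\lambda+\rho+2)t})$.

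The heart of the matter is the behaviour at $t\to-\infty$, i.e. $x\to1$, where $1-x\sim e^{2t}$ and $(\cosh t)^{-\lambda-\rho}\sim 2^{\lambda+\rho}e^{(\lambda+\rho)t}$. With $a=\lambda+\rho+j$, $b=\lambda-\rho+1-j$, $c=1+\lambda$ one computes $c-a-b=-\lambda$, $c-a=1-\rho-j$, $c-b=\rho+j$, and the Gauss connection formula between $x=0$ and $x=1$ (valid unless $\lambda\in\N$) gives
\[
\varphi_{\lambda,j}(t)=2^{\lambda+\rho}\Bigl(A\,e^{(\lambda+\rho)t}+B\,e^{(\rho-\lambda)t}+O\bigl(e^{(\rho-\re\lambda+2)t}\bigr)\Bigr),\qquad t\to-\infty,
\]
where $A=\frac{\Gamma(1+\lambda)\Gamma(-\lambda)}{\Gamma(1-\rho-j)\Gamma(\rho+j)}$ and $B=\frac{\Gamma(1+\lambda)\Gamma(\lambda)}{\Gamma(\lambda+\rho+j)\Gamma(\lambda-\rho+1-j)}$. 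Since $\re\lambda>0$ the factors $\Gamma(1+\lambda),\Gamma(\lambda),\Gamma(\lambda+\rho+j)$ are finite and nonzero, while $\Gamma(\lambda-\rho+1-j)=\infty$ precisely when $\lambda-\rho+1-j\in-\N_0$; hence $B=0$ if and only if $j\in\lambda-\rho+\N$. For the resonant values $\lambda\in\N$ (where $c-a-b\in\Z$) one uses the logarithmic variant of the connection formula: its dominant $(1-x)^{-\lambda}$-term again has a nonzero multiple of $B$ as coefficient, and the extra $\log(1-x)$ sits on the subdominant exponent $0$, so it is harmless in what follows. Because $\re\lambda>0$ gives $\rho-\re\lambda<\rho+\re\lambda$, the term $e^{(\rho-\lambda)t}$ dominates at $-\infty$ whenever $B\ne0$.

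For part (1), if $j\in\lambda-\rho+\N$ then $b\in-\N_0$, so the Gauss series terminates and $F(a,b;1+\lambda;\cdot)$ is a polynomial, bounded on $[0,1]$; hence $\varphi_{\lambda,j}(t)=O(e^{(\re\lambda+\rho)t})$ as $t\to-\infty$. Combined with the estimate at $+\infty$ this yields $|\varphi_{\lambda,j}(t)|^2\cosh^{n-1}t=O(e^{-2\re\lambda|t|})$, which is integrable because $\re\lambda>0$; thus $\varphi_{\lambda,j}\in L^2(\R,\cosh^{n-1}t\,dt)$.

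For part (2) we have $B\ne0$. Linear independence is immediate from the asymptotics: $|\check\varphi_{\lambda,j}(t)|\,e^{-(\rho-\re\lambda)t}\to0$ while $|\varphi_{\lambda,j}(t)|\,e^{-(\rho-\re\lambda)t}\to 2^{\rho+\re\lambda}|B|>0$ as $t\to-\infty$, so no constant multiple of $\check\varphi_{\lambda,j}$ equals $\varphi_{\lambda,j}$. For a non-trivial combination $\psi=c_1\varphi_{\lambda,j}+c_2\check\varphi_{\lambda,j}$: if $c_1\ne0$, the coefficient of $e^{(\rho-\lambda)t}$ in the $-\infty$-expansion of $\psi$ is $2^{\lambda+\rho}c_1B\ne0$ (the $\check\varphi$-part only feeds the exponent $\lambda+\rho$), whence $|\psi(t)|^{2+\epsilon}\cosh^{n-1}t\sim C\,e^{(\epsilon\rho-(2+\epsilon)\re\lambda)t}$ as $t\to-\infty$ and the integral diverges at $-\infty$ as soon as $\epsilon\rho<(2+\epsilon)\re\lambda$; if $c_1=0$ then $c_2\ne0$ and the reflected computation gives $\psi(t)\sim 2^{\lambda+\rho}c_2B\,e^{(\lambda-\rho)t}$ at $+\infty$, so $|\psi(t)|^{2+\epsilon}\cosh^{n-1}t\sim C\,e^{((2+\epsilon)\re\lambda-\epsilon\rho)t}$ and the integral diverges at $+\infty$ under the same condition on $\epsilon$. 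Hence, for any $\epsilon$ with $\epsilon(\rho-\re\lambda)<2\re\lambda$, no non-trivial combination lies in $L^{2+\epsilon}(\R,\cosh^{n-1}t\,dt)$, which is part (2). \textbf{The main obstacle} is the $x\to1$ asymptotics itself: applying the ${}_2F_1$-connection formula correctly, pinning down the zero set $\{j\in\lambda-\rho+\N\}$ of the dominant coefficient $B$, and dispatching the resonant values $\lambda\in\N$, where logarithms appear but, attached to the subdominant exponent, affect none of the threshold computations; everything after that is elementary bookkeeping with exponents.
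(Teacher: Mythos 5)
Your proposal is correct and follows essentially the same route as the paper: decay at $t\to+\infty$ from $F\to 1$ at $x=0$, a terminating Gauss series for part (1), and for part (2) the growth $e^{(\rho-\lambda)t}$ at $t\to-\infty$ governed by the coefficient $B=\Gamma(c)\Gamma(a+b-c)/(\Gamma(a)\Gamma(b))$, which vanishes exactly when $b=\lambda-\rho+1-j\in-\N_0$. The only difference is that the paper extracts this leading behaviour directly from Gauss's limit theorem $\lim_{x\to1^-}(1-x)^{a+b-c}F(a,b;c;x)=B$, valid for all $\re(a+b-c)>0$ with no integrality restriction, which lets it bypass the full connection formula and your separate (correct, but extra) discussion of the logarithmic case $\lambda\in\N$.
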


\begin{proof}It follows from the definition of $\varphi_{\lambda,j}(t)$ that
$$(\cosh t)^{\lambda+\rho}\, \varphi_{\lambda,j}(t)\to 1, \qquad t\to\infty.$$
Since $2\rho=n-1$ this means $\varphi_{\lambda,j}$
has
the desired $L^2$-behavior in the positive direction
for all  $\re\lambda>0$. The only issue is with the negative direction, or equivalently,
with $\check\varphi_{\lambda,j}(t)$ for $t\to\infty$.

We first consider \eqref{lemma sq int (1)}. The assumption that $j\in\lambda-\rho+\N$
implies that $b=\lambda-\rho+1-j\in -\N_0$.
The Gauss series for $F(a,b;c;x)$ terminates and defines a polynomial
when $a$ or $b$ is a non-positive integer.
In particular $F(a,b;c;x)$ is then a bounded function on $[0,1]$.
It then follows from the definition of $\varphi_{\lambda,j}$ that
$$(\cosh t)^{\lambda+\rho}\, \varphi_{\lambda,j}(t)$$
is bounded on $\R$, and hence $|\varphi_{\lambda,j}(t)|^2( \cosh t)^{2\rho}$ is integrable.
This proves \eqref{lemma sq int (1)}.

Now consider \eqref{lemma sq int (2)}.
According to Gauss (see \cite[Thm.~2.1.3]{AAR})  we have
$$\lim_{x\to1^-} (1-x)^{a+b-c}F(a,b;c;x)=A:=\frac{\Gamma(c)\Gamma(a+b-c)}{\Gamma(a)\Gamma(b)}$$
if $\re(a+b-c)>0$.
We apply this to $t\to\infty$ in
$$(\cosh t)^{\lambda+\rho} \,\check\varphi_{\lambda,j}(t)=F(\lambda+\rho+j,\lambda-\rho+1-j;1+\lambda;(1+e^{-2t})^{-1}).$$
Here $a+b-c=\lambda$ and
$$
A=\frac{\Gamma(1+\lambda)\Gamma(\lambda)}{\Gamma(\lambda+\rho+j)\Gamma(\lambda-\rho+1-j)}.
$$
Since $x=(1+e^{-2t})^{-1}$ implies $1-x=(1+e^{2t})^{-1}$ it follows that
$$
e^{-2\lambda t} (\cosh t)^{\lambda+\rho}\,\check\varphi_{\lambda,j}(t)\to  A, \qquad t\to \infty.
$$
In particular we note that $A\neq0$ if $j\notin \lambda-\rho+\N$. Under this condition
we see that no non-trivial linear combination of
$\varphi_{\lambda,j}$ and $\check\varphi_{\lambda,j}$ exhibits $L^{2+\epsilon}$-behavior
in both directions $\pm\infty$.

This proves \eqref{lemma sq int (2)} and concludes the proof of the lemma.
\end{proof}

\subsection{Parity of $\varphi_{\lambda,j}$}\label{subsection Parity of varphi}

We have seen that $\varphi_{\lambda,j}$ and $\check\varphi_{\lambda,j}$ are independent solutions to
\eqref{ode for varphi} when $j\notin\lambda-\rho+\N$. For $j\in\lambda-\rho+\N$ they are proportional,
as the following lemma shows.
For $\alpha,\beta>-1$ and $l\in\N_0$ let
$$P^{(\alpha,\beta)}_l(z):=\frac{(\alpha+1)_l}{l!}\,F(l+\alpha+\beta+1,-l;\alpha+1;\frac12(1-z))$$
be the corresponding Jacobi polynomial (see \cite[page 115]{BW}).

\begin{lemma}\label{lemma even/odd eigen}
Assume $j=\lambda-\rho+1+l\in \N_0$ where $l\in\N_0$. Then
\begin{enumerate}\itemsep=2pt
\item\label{item one of lemma even/odd eigen}
$\frac{(\lambda+1)_l}{l!}\,\varphi_{\lambda,j}(t)=(\cosh t)^{-\lambda-\rho}P^{(\lambda,\lambda)}_l(\tanh t),$
\item\label{item two of lemma even/odd eigen}
$\varphi_{\lambda,j}(-t)=(-1)^l\,\varphi_{\lambda,j}(t)$,
\end{enumerate}
for all $t\in\R$.
\end{lemma}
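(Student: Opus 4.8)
The plan is to recognize, under the hypothesis $j=\lambda-\rho+1+l$, the hypergeometric function appearing in $\varphi_{\lambda,j}$ as a rescaled Jacobi polynomial, and then to read off the parity assertion from the reflection symmetry of Jacobi polynomials. Both parts will then amount to bookkeeping of parameters together with the elementary identities $\cosh(-t)=\cosh t$, $\tanh(-t)=-\tanh t$ and $\tfrac12(1-\tanh t)=(1+e^{2t})^{-1}$.

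First I would substitute $j=\lambda-\rho+1+l$ into the parameters $a=\lambda+\rho+j$, $b=\lambda-\rho+1-j$, $c=1+\lambda$ of the hypergeometric function in the definition of $\varphi_{\lambda,j}$. This yields $b=-l\in-\N_0$, so the Gauss series terminates after $l+1$ terms, while $a=2\lambda+1+l$ and $c=\lambda+1$; note that all series occurring are well defined because $\varphi_{\lambda,j}$ was introduced only for $\lambda\notin-\N$, which is exactly the condition $c=1+\lambda\notin-\N_0$. Since the substitution $x=(1+e^{2t})^{-1}$ equals $\tfrac12(1-\tanh t)$, the argument of this hypergeometric function is precisely the argument $\tfrac12(1-z)$ occurring in the definition of $P^{(\lambda,\lambda)}_l$ at $z=\tanh t$, and the numerator parameters $l+\alpha+\beta+1=l+2\lambda+1$ and $-l$ match as well. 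Comparing the two terminating series and inserting the normalizing constant $\tfrac{(\lambda+1)_l}{l!}$ from the definition of the Jacobi polynomial gives the first assertion
\[
\tfrac{(\lambda+1)_l}{l!}\,\varphi_{\lambda,j}(t)=(\cosh t)^{-\lambda-\rho}\,P^{(\lambda,\lambda)}_l(\tanh t).
\]

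For the second assertion I would replace $t$ by $-t$ in the formula just obtained. Since $\cosh$ is even and $\tanh$ is odd, the right-hand side becomes $(\cosh t)^{-\lambda-\rho}P^{(\lambda,\lambda)}_l(-\tanh t)$, and the standard reflection formula $P^{(\alpha,\beta)}_l(-z)=(-1)^l P^{(\beta,\alpha)}_l(z)$ for Jacobi polynomials, specialised to $\alpha=\beta=\lambda$, gives $P^{(\lambda,\lambda)}_l(-\tanh t)=(-1)^l P^{(\lambda,\lambda)}_l(\tanh t)$. Dividing by the nonzero constant $\tfrac{(\lambda+1)_l}{l!}$ then yields $\varphi_{\lambda,j}(-t)=(-1)^l\varphi_{\lambda,j}(t)$. (Alternatively the second assertion can be obtained directly from the transformation relating a terminating ${}_2F_1$ evaluated at $x$ and at $1-x$, but invoking the symmetry of Jacobi polynomials is shorter.) There is essentially no obstacle here; the only point deserving a word of care is the admissibility of the — possibly negative — parameter $\lambda$ in $P^{(\lambda,\lambda)}_l$, which is handled once one uses the defining hypergeometric series rather than the restriction $\alpha,\beta>-1$, and which is guaranteed precisely by the hypothesis $\lambda\notin-\N$ under which $\varphi_{\lambda,j}$ was defined.
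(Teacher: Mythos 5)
Your proof is correct and follows essentially the same route as the paper: substitute $j=\lambda-\rho+1+l$ to identify the terminating hypergeometric series with the Jacobi polynomial $P^{(\lambda,\lambda)}_l(\tanh t)$ via its defining formula, then deduce the parity from the reflection symmetry of $P^{(\lambda,\lambda)}_l$ (the paper phrases this as the even/odd property of Gegenbauer polynomials of given degree). Your added remarks on the admissibility of the parameters are fine but not needed beyond what the paper already assumes.
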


Note that with the repeated indices $P^{(\lambda,\lambda)}_l$ is in fact a Gegenbauer polynomial.

\begin{proof}
By definition
$$\varphi_{\lambda,j}(t)=(\cosh t)^{-\lambda-\rho}\, F(2\lambda+1+l,-l;\lambda+1;x)$$
where $x=(1+e^{2t})^{-1}=\frac12(1-\tanh t)$. The equation (\ref{item one of lemma even/odd eigen}) follows
immediately. Then (\ref{item two of lemma even/odd eigen}) follows since
a Gegenbauer polynomial is even or odd according to the parity
of its degree.
\end{proof}

\subsection{$K$-types in $L^2$}\label{subsection K-types in L2}
For $\lambda\in\C$ with $\re\lambda\ge 0$ we define
$$
D_\lambda:=\N_0 \cap (\lambda-\rho+\N)
$$
if $\lambda-\rho\in\Z$ and $\lambda> 0$, and by $D_\lambda=\emptyset$ otherwise.
Furthermore we let
$$
U_\lambda:=\bigoplus_{j\in D_\lambda} \, \big(\Hc_j\otimes \varphi_{\lambda,j}\big).
$$
It follows  from
Lemma \ref{lemma even/odd eigen} and the fact that the parity of $\Hc_j$ is $(-1)^j$ that
$U_\lambda\subset \Ec_{\lambda,K}^{\even} (X)$
if $\lambda-\rho$ is odd, and
$U_\lambda \subset \Ec_{\lambda,K}^{\odd} (X)$
if $\lambda-\rho$ is even.

\begin{lemma}\label{lemma square integrability}
For all $\lambda\in\C$ with $\re\lambda> 0$ we have
\begin{equation}\label{eq U tempered}
U_\lambda=\Ec_{\lambda,K}(X)\cap L^2(X)  =\Ec_{\lambda,K}(X)\cap C^\infty_{\temp}(X).
\end{equation}
\end{lemma}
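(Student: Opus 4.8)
The plan is to reduce the asserted identity \eqref{eq U tempered} to a statement about the one-dimensional solutions of the radial equation \eqref{ode for varphi}, and then to read off that statement from the asymptotics already produced in the proofs of Lemmas~\ref{lemma sq int} and~\ref{lemma even/odd eigen}, supplemented by a single reduction-of-order computation.

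First I would set up the reduction to the radial equation. Recall from \S\ref{subsection Eigenspaces} that $\Ec_{\lambda,K}(X)=\bigoplus_j\Ec_{\lambda,j}(X)$ with $\Ec_{\lambda,j}(X)=\Hc_j\otimes\Sc_{\lambda,j}$, where $\Sc_{\lambda,j}\subset C^\infty(\R)$ denotes the two-dimensional space of solutions of \eqref{ode for varphi}. The key point is that, for $f=\sum_j f_j\in\Ec_{\lambda,K}(X)$ (a finite sum with $f_j\in\Ec_{\lambda,j}(X)$), one has $f\in L^2(X)$ (resp.\ $f\in C^\infty_{\temp}(X)$) if and only if each $f_j$ does, and --- writing $f_j=\sum_m h_m\otimes\psi_m$ with $\{h_m\}$ an orthonormal basis of $\Hc_j$ in $L^2(S^{n-1})$ --- if and only if every $\psi_m$ lies in $L^2(\R,\cosh^{n-1}t\,dt)$ (resp.\ in $L^{2+\epsilon}(\R,\cosh^{n-1}t\,dt)$ for all $\epsilon>0$). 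For the $L^2$-case this is immediate from orthogonality of the $\Hc_j$ in $L^2(S^{n-1})$ and Fubini. For the tempered case I would recover $f_j$ from $f$ by the $\Hc_j$-isotypic projector, which is an averaging operator over $K$ and is therefore bounded on every $L^p(X)$ and preserves $C^\infty(X)$, and then recover each $\psi_m$ from $f_j$ by Fubini and pairing against $h_m$ in the sphere variable (H\"older on the compact $S^{n-1}$ gives $|\psi_m(t)|\le C\,\|f_j(\cdot,t)\|_{L^{2+\epsilon}(S^{n-1})}$); the moderate-growth requirement implicit in $C^\infty_{\temp}(X)$ is automatic for $K$-finite eigenfunctions. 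This reduces everything to computing $\Sc_{\lambda,j}\cap L^2(\R,\cosh^{n-1}t\,dt)$ and $\Sc_{\lambda,j}\cap\bigcap_{\epsilon>0}L^{2+\epsilon}(\R,\cosh^{n-1}t\,dt)$ for each $j$, noting that for $\re\lambda>0$ one has $j\in D_\lambda$ precisely when $j\in\lambda-\rho+\N$.

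Next comes the radial analysis. From $(\cosh t)^{\lambda+\rho}\varphi_{\lambda,j}(t)\to1$ we get $\varphi_{\lambda,j}(t)\sim 2^{\lambda+\rho}e^{-(\lambda+\rho)t}$ as $t\to\infty$; since $2\rho=n-1$, the Wronskian density of \eqref{ode for varphi} is $(\cosh t)^{-2\rho}$, so a solution independent of $\varphi_{\lambda,j}$ is $\varphi_{\lambda,j}(t)\int^{t}(\cosh s)^{-2\rho}\varphi_{\lambda,j}(s)^{-2}\,ds$, and feeding in the above asymptotics one checks this is asymptotic to a nonzero multiple of $e^{(\lambda-\rho)t}$ as $t\to\infty$. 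Because $\re\lambda>0$ this exponent is too large for the complementary solution to be square integrable --- or even in $L^{2+\epsilon}$ for small $\epsilon$ --- near $+\infty$, whereas $\varphi_{\lambda,j}$ is; hence for every $j$ the space of solutions lying in $L^2$ near $+\infty$, and likewise in $L^{2+\epsilon}$ near $+\infty$ for small $\epsilon$, is exactly $\C\varphi_{\lambda,j}$. I then split into two cases. If $j\notin D_\lambda$, i.e.\ $j\notin\lambda-\rho+\N$, the proof of Lemma~\ref{lemma sq int} shows that the constant $A$ there is nonzero and $\varphi_{\lambda,j}(t)\sim A\,2^{\lambda+\rho}e^{(\rho-\lambda)t}$ as $t\to-\infty$, so $\varphi_{\lambda,j}$ fails to be in $L^2$ (and in $L^{2+\epsilon}$ for small $\epsilon$) near $-\infty$, whence both intersections above are $0$ (for the tempered part alone one may simply quote Lemma~\ref{lemma sq int}(2)). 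If $j\in D_\lambda$, then by Lemma~\ref{lemma even/odd eigen} the function $(\cosh t)^{\lambda+\rho}\varphi_{\lambda,j}(t)$ is a Gegenbauer polynomial in $\tanh t$, hence bounded, so $|\varphi_{\lambda,j}(t)|\le C\,e^{-(\re\lambda+\rho)|t|}$; since $\re\lambda>0$ this lies in $L^p(\R,\cosh^{n-1}t\,dt)$ for every $p\ge2$, and therefore $\C\varphi_{\lambda,j}\subseteq\Sc_{\lambda,j}\cap\bigcap_{\epsilon>0}L^{2+\epsilon}(\R,\cosh^{n-1}t\,dt)\subseteq\C\varphi_{\lambda,j}$, and likewise with $L^2$.

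Summing over $j$ and feeding the conclusion of the radial analysis back through the first step yields
$$\Ec_{\lambda,K}(X)\cap L^2(X)=\bigoplus_{j\in D_\lambda}\big(\Hc_j\otimes\varphi_{\lambda,j}\big)=U_\lambda=\Ec_{\lambda,K}(X)\cap C^\infty_{\temp}(X),$$
which is \eqref{eq U tempered}. I expect the main obstacle to be the growth estimate for the solution of \eqref{ode for varphi} complementary to $\varphi_{\lambda,j}$ in the radial step: this is exactly what rules out extra square-integrable directions when $j\in D_\lambda$ and singles out $\varphi_{\lambda,j}$ as the only candidate when $j\notin D_\lambda$, and the reduction-of-order formula handles it uniformly in $\lambda$, including the resonant values $2\lambda\in\N$ where the second solution might a priori acquire a logarithm. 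The functional-analytic bookkeeping of the first step is routine but needs some care, since $L^{2+\epsilon}(X)$ carries no orthogonal structure to exploit and one must instead use the boundedness of the $K$-isotypic projectors together with norm equivalence on the finite-dimensional spaces $\Hc_j$.
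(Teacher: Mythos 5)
Your proposal is correct, and it follows the paper's overall skeleton (reduce to the radial equation \eqref{ode for varphi}, put $\varphi_{\lambda,j}$ into $L^2$ via the terminating/Gegenbauer case of Lemma~\ref{lemma even/odd eigen}, quote Lemma~\ref{lemma sq int}(2) when $j\notin\lambda-\rho+\N$), but it handles the one genuinely delicate step differently. To rule out the solution complementary to $\varphi_{\lambda,j}$ when $j\in D_\lambda$, the paper constructs that solution explicitly and must split into two cases: for $n$ even it takes $\varphi_{-\lambda,j}$, while for $n$ odd the parameter $c=1+\lambda$ is an integer and the paper invokes N\"orlund's logarithmic solution $G(a,b,c;x)=x^{-\lambda}\sum a_\nu x^\nu+\log x\sum b_\nu x^\nu$ of the hypergeometric equation, reading off the growth $e^{(\lambda-\rho)t}$ from the $x^{-\lambda}$ term. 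You instead get the same leading asymptotics $e^{(\lambda-\rho)t}$ for \emph{any} solution independent of $\varphi_{\lambda,j}$ directly from the reduction-of-order formula $\varphi_{\lambda,j}\int^t(\cosh s)^{-2\rho}\varphi_{\lambda,j}(s)^{-2}\,ds$ together with the Wronskian $(\cosh t)^{-2\rho}$ and the known limit $(\cosh t)^{\lambda+\rho}\varphi_{\lambda,j}(t)\to1$; this is uniform in $\lambda$, needs no parity-of-$n$ case distinction, and dispenses with the reference to the logarithmic Frobenius solution (the possible $\log$ only affects lower-order terms and does not disturb the leading exponent since $\re\lambda>0$). The trade-off is that the paper's route exhibits concrete second solutions, while yours is more elementary and self-contained; your explicit bookkeeping with $K$-isotypic projectors and H\"older for the $L^{2+\epsilon}$ reduction is also more detailed than the paper, which leaves that reduction implicit. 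Both arguments are sound and yield \eqref{eq U tempered}.
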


\begin{proof} Let $\re\lambda> 0$. When $\lambda-\rho\notin\Z$
it follows immediately from
Lemma \ref{lemma sq int}(\ref{lemma sq int (2)}) that
$$\Ec_{\lambda,j}(X)\cap  C^\infty_{\temp}(X)=\{0\}$$
for each $j\in\N_0$. Since $U_\lambda=\{0\}$ in this case \eqref{eq U tempered} follows.
Assume from now on that $\lambda-\rho\in\Z$.
It then follows from
Lemma \ref{lemma sq int}(\ref{lemma sq int (1)}) that $U_\lambda\subset L^2(X)$.

To complete the proof we will find for each $j\in\N_0$
a second solution to \eqref{ode for varphi},
which is linearly independent from $\varphi_{\lambda,j}$, and which is not
tempered. There are two cases, depending on the parity of $n$.

If $n$ is even, then $\rho$, and hence also $\lambda$, is not an integer. In that case
we already have a second solution at hand, namely $\varphi_{-\lambda,j}$.
Since
$$(\cosh t)^{-\lambda+\rho}\,\varphi_{-\lambda,j}(t)\to 1, \qquad t\to\infty,$$
this function $\varphi_{-\lambda,j}$ does not belong to any
$L^p(\R,\cosh^{n-1}\!t\,dt)$ if $\re\lambda\ge\rho$.
When $0<\re\lambda<\rho$ it belongs to
$L^{2+\epsilon}$ only for $\epsilon>\frac{2\re\lambda}{\rho-\re\lambda}$.

We now assume $n$ is odd. Then $\rho$ and $\lambda$ are positive integers.
We need to find a solution
linearly independent from $F(a,b;c;x)$
to the hypergeometric equation
\eqref{hypergeometric equation} with
$$a=\lambda+\rho+j,\quad b=\lambda-\rho+1-j,\quad c=1+\lambda.$$
By the method of Frobenius one finds (see \cite[p. 5]{Norlund}) such a solution
$G(a,b,c;x)$. It has the form
$$G(x)=x^{-\lambda}\sum_{\nu=0}^\infty a_\nu x^\nu +\log x\,\sum_{\nu=0}^\infty b_\nu x^\nu$$
for some  explicit power series with $a_0=1$ and $b_0\neq 0$.
The corresponding solution to \eqref{ode for varphi} is
$$(\cosh t)^{-\lambda-\rho}\,G(a,b,c;(1+e^{2t})^{-1}).$$
It behaves like $(\cosh t)^{-\lambda-\rho}(1+e^{2t})^{\lambda}$ as $t\to\infty$ and
as before it is not tempered with respect to the invariant measure.
\end{proof}

\subsection{Irreducibility}\label{subsection Irreducibility}
Let $\lambda\in\rho+\Z$ and assume $\lambda> 0$.
It follows from Lemma \ref{lemma square integrability} that $U_\lambda$ is $(\gf,K)$-invariant.
We will prove that it is an irreducible $(\gf,K)$-module by using the infinitesimal element
\begin{equation}\label{defi T}
T=E_{n+1,1}+E_{1,n+1}\in\gf=\so(n,1),
\end{equation}
as a raising and lowering operator between the functions $\varphi_{\lambda,j}$ which generate $U_\lambda$ together
with $K$. For this we need to find the derivative of $\varphi_{\lambda,j}$.

\begin{lemma}\label{lemma derivative varphi}
Let $j=\lambda-\rho+1+l\in\N_0$ where $l\in\N_0$.
There exist constants $A_l,B_l\in\R$ such that
$$
\varphi_{\lambda,j}'=A_l\,\varphi_{\lambda,j+1}+     B_l\,\varphi_{\lambda,j-1}.
$$
Both $A_l$ and $B_l$ are non-zero, except when $l=0$ or $j=0$, in which cases only
$A_l$ is non-zero.
\end{lemma}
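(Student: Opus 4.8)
First I would compute the action of $T\in\gf=\so(n,1)$ on $C^\infty(X)$, through the regular representation, as a first-order differential operator $\mathcal{T}$ and write it out in the coordinates \eqref{eq polar coords}. The one-parameter subgroup generated by $T$ is the Lorentz boost in the $(x_1,x_{n+1})$-plane, and differentiating its flow in the coordinates $(y,t)$ gives
\begin{equation*}
\mathcal{T}=y_1\,\frac{\partial}{\partial t}+\tanh t\cdot D,
\end{equation*}
where $D=\sum_{k=1}^n(\delta_{1k}-y_1y_k)\,\partial_{y_k}$ is the spherical gradient vector field on $S^{n-1}$ of the coordinate function $y_1$. Since $\Delta$ is $G$-invariant, $\mathcal{T}$ commutes with $\Delta$, so $\mathcal{T}$ preserves $\Ec_{\lambda,K}(X)$, and since both $y_1\cdot$ and $D$ map $\Hc_j$ into $\Hc_{j+1}\oplus\Hc_{j-1}$, $\mathcal{T}$ shifts the $K$-type by $\pm1$.

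Next I would record the relevant spherical-harmonics recursion. For $h\in\Hc_j$ write $y_1h=u+v$ with $u\in\Hc_{j+1}$ and $v\in\Hc_{j-1}$ (with $v=0$ if $j=0$). Applying the product rule for $\Delta_K$ to $y_1h$, using $\Delta_Ky_1=-(n-1)y_1$ and the eigenvalues $-j(j+n-2)$ of $\Delta_K$ on $\Hc_j$, one obtains $Dh=-j\,u+(n+j-2)\,v$. Feeding this into the formula for $\mathcal{T}$ gives, for every $h\in\Hc_j$,
\begin{equation*}
\mathcal{T}\big(h\otimes\varphi_{\lambda,j}\big)=u\otimes\psi_{+}+v\otimes\psi_{-},\qquad
\psi_{\pm}:=\varphi_{\lambda,j}'+c_{\pm}\tanh t\,\varphi_{\lambda,j},
\end{equation*}
with $c_{+}=-j$ and $c_{-}=n+j-2$; note that $\psi_{\pm}$ do not depend on $h$.

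The key step is then to observe that, as $\mathcal{T}\big(h\otimes\varphi_{\lambda,j}\big)\in\Ec_{\lambda,K}(X)$, its $\Hc_{j\pm1}$-isotypic parts lie in $\Ec_{\lambda,j\pm1}(X)$; by the separation of variables of Section \ref{subsection Eigenspaces}, choosing $h$ with $u\neq0$ (resp.\ $v\neq0$) shows that $\psi_{+}$ solves \eqref{ode for varphi} with $j$ replaced by $j+1$, and $\psi_{-}$ with $j$ replaced by $j-1$. Both $\psi_{\pm}$ lie in $L^2(\R,\cosh^{n-1}t\,dt)$, since $\varphi_{\lambda,j}$ and $t\mapsto\tanh t\,\varphi_{\lambda,j}(t)$ decay like $(\cosh t)^{-\lambda-\rho}$ and $\re\lambda>0$. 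Now $j\in\lambda-\rho+\N$ forces $j+1\in\lambda-\rho+\N$, and forces $j-1\in\lambda-\rho+\N$ precisely when $l\geq1$; in those cases the argument in the proof of Lemma \ref{lemma square integrability} (the Frobenius second solution is non-tempered) shows that $\varphi_{\lambda,j+1}$, resp.\ $\varphi_{\lambda,j-1}$, spans the tempered solutions of the corresponding equation, so $\psi_{+}=A_l'\,\varphi_{\lambda,j+1}$ and, for $l\geq1$, $\psi_{-}=B_l'\,\varphi_{\lambda,j-1}$; for $l=0$, Lemma \ref{lemma sq int}(\ref{lemma sq int (2)}) shows the $(j-1)$-equation has no tempered solution, whence $\psi_{-}=0$. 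Eliminating $\tanh t\,\varphi_{\lambda,j}$ between the two identities for $\psi_{\pm}$ yields
\begin{equation*}
\varphi_{\lambda,j}'=\frac{n+j-2}{n+2j-2}\,A_l'\,\varphi_{\lambda,j+1}+\frac{j}{n+2j-2}\,B_l'\,\varphi_{\lambda,j-1},
\end{equation*}
which is the asserted identity after renaming the constants; the $\varphi_{\lambda,j-1}$-term is absent when $j=0$.

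Finally, the non-vanishing follows from asymptotics. If $A_l=0$, then $\psi_{+}=0$, so $(\log\varphi_{\lambda,j})'=j\tanh t$ wherever $\varphi_{\lambda,j}\neq0$, forcing $\varphi_{\lambda,j}(t)=c(\cosh t)^{j}$ for large $t$; but $(\cosh t)^{\lambda+\rho}\varphi_{\lambda,j}(t)\to1$ as $t\to\infty$ and $j\neq-\lambda-\rho$ since $\re\lambda>0$, a contradiction. Likewise, if $l\geq1$ and $B_l=0$, then $\psi_{-}=0$, forcing $\varphi_{\lambda,j}(t)=c(\cosh t)^{-(n+j-2)}$ for large $t$, hence $n+j-2=\lambda+\rho$, i.e.\ $j=\lambda-\rho+1$ and $l=0$, again a contradiction. (Alternatively, the derivative and three-term recurrences for the Gegenbauer polynomials $P^{(\lambda,\lambda)}_l$, combined with Lemma \ref{lemma even/odd eigen}, give $A_l'=-(l+1+2\lambda)$ and $B_l'=l$ outright.) I expect the main obstacle to be the first step: computing $\mathcal{T}$ in the coordinates $(y,t)$ and pinning down the coefficient $n+j-2$ in the recursion for $Dh$, since it is precisely the fact that $n+j-2$ equals $\lambda+\rho$ exactly when $l=0$ that separates the two cases in the statement.
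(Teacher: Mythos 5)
Your proof is correct, and it takes a genuinely different route from the paper. The paper proves Lemma \ref{lemma derivative varphi} by brute force on the special-function side: it substitutes the Gegenbauer form of $\varphi_{\lambda,j}$ from Lemma \ref{lemma even/odd eigen}, differentiates, and combines the derivative formula for $P^{(\lambda,\lambda)}_l$ with the three-term recurrence to extract the explicit constants $A_l=-\tfrac{(\lambda+\rho+l)(2\lambda+l+1)}{2\lambda+2l+1}$ and $B_l=\tfrac{l(\lambda-\rho+l+1)}{2\lambda+2l+1}$, from which the vanishing pattern is read off directly. You instead run the logic of Section \ref{subsection Irreducibility} in reverse: you compute the vector field of $T$ from \eqref{defi T} in the coordinates $(y,t)$, use the classical decomposition $y_1h=u+v$ together with $Dh=-ju+(n+j-2)v$ to see that $T$ shifts $K$-types by one, and then pin down the two radial components $\psi_\pm$ as multiples of $\varphi_{\lambda,j\pm1}$ by temperedness and the uniqueness (up to scalar) of the tempered solution of \eqref{ode for varphi}, with non-vanishing forced by the $(\cosh t)^{-\lambda-\rho}$ asymptotics. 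Your constants $A_l=\tfrac{n+j-2}{n+2j-2}A_l'$, $B_l=\tfrac{j}{n+2j-2}B_l'$ with $A_l'=-(2\lambda+l+1)$, $B_l'=l$ agree exactly with the paper's, which is a good consistency check. Two small points to tighten: the claim that $\psi_\pm$ are tempered requires the decay of $\varphi_{\lambda,j}'$ itself, not just of $\varphi_{\lambda,j}$ and $\tanh t\,\varphi_{\lambda,j}$ --- this follows immediately from the polynomial form in Lemma \ref{lemma even/odd eigen}(\ref{item one of lemma even/odd eigen}), but should be said; and the existence of $h\in\Hc_j$ with $u\neq0$ (and $v\neq0$ for $j\geq1$) deserves a word (the zonal harmonic works). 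The trade-off is clear: the paper's computation is shorter and yields the constants explicitly, while your argument is more conceptual, explains \emph{why} a two-term relation must hold, and absorbs most of the work that the paper otherwise has to redo in Section \ref{subsection Irreducibility}.
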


\begin{proof}
Recall from Lemma \ref{lemma even/odd eigen}
$$\varphi_{\lambda,j}(t)=\tfrac{l!}{(\lambda+1)_l}\,(\cosh t)^{-\lambda-\rho}\,P^{(\lambda,\lambda)}_l(\tanh t)$$
for $j=\lambda-\rho+1+l$. It follows that
$$\varphi_{\lambda,j}'(t)=\tfrac{l!}{(\lambda+1)_l}\,(\cosh t)^{-\lambda-\rho}\,
\big(\! -\!(\lambda+\rho)x\,P^{(\lambda,\lambda)}_l(x)+ (1-x^2)\,(P_l^{(\lambda,\lambda)})'(x)\big)$$
where $x=\tanh t$.

We obtain from \cite[(4.7)]{THK} that
$$
(1-x^2)\,(P_l^{(\lambda,\lambda)})'(x)=(l+2\lambda+1)\,xP_l^{(\lambda,\lambda)}(x)
    -\frac{(l+1)(l+2\lambda+1)}{l+\lambda+1}\,P_{l+1}^{(\lambda,\lambda)}(x).
$$
By \cite[(5.5.5)]{BW} the polynomials $P^{(\lambda,\lambda)}_l$ satisfy a three term recurrence relation
\begin{align*}
&(l+\lambda+1)(2l+2\lambda+1)\,xP^{(\lambda,\lambda)}_{l}(x)\\
&\quad=(l+\lambda)(l+\lambda+1)\,P^{(\lambda,\lambda)}_{l-1}(x)
+(l+1)(l+2\lambda+1)\,P^{(\lambda,\lambda)}_{l+1}(x).
\end{align*}
With this relation we can eliminate $xP^{(\lambda,\lambda)}_{l}(x)$ and
obtain $\varphi_{\lambda,j}'$ as a linear combination of $\varphi_{\lambda,j+1}$ and
$\varphi_{\lambda,j-1}$. The coefficients turn out to be
$$A_l=-\frac{(\lambda+\rho+l)(2\lambda+l+1)}{2\lambda+2l+1},\quad
B_l=\frac{l(\lambda-\rho+l+1)}{2\lambda+2l+1}.$$
All these coefficients are non-zero, except $B_l$ when $l=0$ or $\lambda-\rho+l+1=0$.
\end{proof}

Let $M= K\cap H$ be the stabilizer in $K$ of $x_0$, that is
$$M=\begin{pmatrix}1&&0\\&\SO(n-1)&\\0&&1\end{pmatrix}\subset \SO_0(n,1)=G.$$
Then $S^{n-1}\simeq K/M$. Let $h_j\in\Hc_j$ be the zonal spherical harmonic.
This is the unique function in $\Hc_j$ which is $M$-invariant and has the value
$1$ at the origin $(1,0,\dots,0)$ of $S^{n-1}$. Furthermore, let $f_{\lambda,j}\in\Ec_{\lambda}(X)$
be defined by
$$f_{\lambda,j}(y,t)=h_j(y)\varphi_{\lambda,j}(t).$$

With the element $T$ from \eqref{defi T}
the coordinates  $(y,t)$ are determined from
$$K/M\times \R \ni (kM,t) \mapsto k\exp(tT)x_0 \in X,$$
and since $T$ is centralized by $M$ the left derivative $L_T f_{\lambda,j}$ by $T$ is again $M$-invariant.
It follows that for each $j\in\N_0$, the function $L_T f_{\lambda,j}\in\Ec_{\lambda,K}(X)$ is a linear combination of the
same family of functions $f_{\lambda,\boldsymbol{\cdot}}$ in $\Ec_{\lambda, K}(X)$.
Since $h_j(x_0)=1$ for all $j$, the coefficients can be determined from the restriction to
$$\{(\cosh t,0,\dots,0,\sinh t)\mid t\in\R\}\subset X,$$
on which $L_T$ acts just by $\frac{d}{dt}$, and hence they are given by Lemma
\ref{lemma derivative varphi}. It follows immediately that $U_\lambda$ has no non-trivial $(\gf,K)$-invariant subspaces.

\subsection{Equivalence}\label{subsection Equivalence}
Let $\lambda\in\rho+\Z$ and assume $0<\lambda<\rho$.

\begin{lemma}\label{lemma equivalent}
The $(\gf,K)$-modules $\Ec_{\lambda,K}^{\even}(X)$ and $\Ec_{\lambda,K}^{\odd}(X)$ are irreducible and
equivalent.
\end{lemma}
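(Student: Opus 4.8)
The plan is to identify one of the two eigenspaces with the module $U_\lambda$ of \S\ref{subsection K-types in L2}, whose irreducibility is already established in \S\ref{subsection Irreducibility}, and then to recognize both eigenspaces as the spherical principal series of $G=\SO_0(n,1)$.

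\emph{Step 1: one of the two eigenspaces equals $U_\lambda$ and is therefore irreducible.}
Since $\lambda\in\rho+\Z$ and $0<\lambda<\rho$, the number $\lambda-\rho$ is a negative integer, so $\lambda-\rho+1\le0$, whence $\lambda-\rho+\N\supseteq\N_0$ and $D_\lambda=\N_0$. Thus $U_\lambda=\bigoplus_{j\ge0}\big(\Hc_j\otimes\varphi_{\lambda,j}\big)$ contains a copy of every $\Hc_j$. By \S\ref{subsection K-types in L2}, $U_\lambda$ is a $K$-submodule of $\Ec^{\even}_{\lambda,K}(X)$ if $\lambda-\rho$ is odd and of $\Ec^{\odd}_{\lambda,K}(X)$ if $\lambda-\rho$ is even; the ambient space is, as a $K$-module, the multiplicity-free sum $\bigoplus_{j\ge0}\Hc_j$ (\S\ref{subsection Eigenspaces}), so the inclusion is an equality. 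Let $V$ be this eigenspace and $W$ the eigenspace of the opposite parity. As $\lambda\in\rho+\Z$ and $\lambda>0$, \S\ref{subsection Irreducibility} applies and shows $V=U_\lambda$ is irreducible. It remains to show $W$ is irreducible and $W\simeq V$. Note that $V$ and $W$ both decompose under $K$ as $\bigoplus_{j\ge0}\Hc_j$ with all multiplicities $1$; in particular each contains the trivial $K$-type, and on each $Z(\gf)$ acts by the single character attached to $\pm\lambda$ — the algebra of invariant differential operators on the rank-one space $X$ being $\C[\Delta]$ — so each is a Harish-Chandra module.

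\emph{Step 2: both eigenspaces are the spherical principal series.}
I invoke the classical description of the $(\gf,K)$-modules of $\SO_0(n,1)$ with a nonzero $K$-fixed vector. For $0<\lambda<\rho$ the spherical principal series $\pi_\lambda$ is irreducible — this interval lies outside the reducibility set $\pm(\rho+\N_0)$, i.e.\ in the complementary-series range — and its $K$-types are $\bigoplus_{j\ge0}\Hc_j$, each with multiplicity $1$. By the subquotient theorem, every irreducible $(\gf,K)$-module with infinitesimal character $\pm\lambda$ and a nonzero $K$-fixed vector is a subquotient of $\pi_\lambda$ or of $\pi_{-\lambda}$; since $\pi_\lambda\simeq\pi_{-\lambda}$ is irreducible, such a module is isomorphic to $\pi_\lambda$. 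Applied to $V$ this gives $V\simeq\pi_\lambda$. For $W$: being a Harish-Chandra module it has finite length, and the trivial $K$-type, of multiplicity $1$ in $W$, occurs in exactly one irreducible subquotient $S=W'/W''$ (with $W''\subseteq W'\subseteq W$) of a composition series of $W$. Then $S$ is spherical with infinitesimal character $\pm\lambda$, so $S\simeq\pi_\lambda$ and $[S:\Hc_j]=1$ for every $j$, where $[\,M:\Hc_j\,]$ denotes the multiplicity of $\Hc_j$ in a $K$-module $M$. Comparing multiplicities,
\[
1=[W':\Hc_j]-[W'':\Hc_j]\le[W':\Hc_j]\le[W:\Hc_j]=1\qquad(j\ge0),
\]
which forces $[W'':\Hc_j]=0$ and $[W':\Hc_j]=1$ for all $j$, i.e.\ $W''=0$ and $W'=W$. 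Hence $W=S$ is irreducible and $W\simeq\pi_\lambda\simeq V$, as asserted.

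\emph{On the main obstacle.} The delicate part is the analysis of $W$; Step 2 trades all computation for the (standard but external) irreducibility of the complementary series and the subquotient theorem. A self-contained alternative, in the style of \S\ref{subsection Irreducibility}, is to pick for each $j$ the solution $\psi_{\lambda,j}$ of \eqref{ode for varphi} of parity opposite to $\varphi_{\lambda,j}$, spanning the $\Hc_j$-isotypic component of $W$ — for $n$ even one may take $\psi_{\lambda,j}=\varphi_{-\lambda,j}$, which has the required parity because $\lambda+\rho\in\N$ (Lemma \ref{lemma even/odd eigen} with $\lambda$ replaced by $-\lambda$), whereas for $n$ odd the second solution is the logarithmic Frobenius solution of the proof of Lemma \ref{lemma square integrability} — and then to establish, via contiguous relations, a recurrence $\psi_{\lambda,j}'=A_j'\,\psi_{\lambda,j+1}+B_j'\,\psi_{\lambda,j-1}$ as in Lemma \ref{lemma derivative varphi} with $A_j'\ne0$ for all $j$ and $B_j'\ne0$ for $j\ge1$; irreducibility of $W$ then follows as in \S\ref{subsection Irreducibility}, and the equivalence $V\simeq W$ by matching the two recurrences. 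Along this route the obstacle is the case distinction on the parity of $n$ together with the non-vanishing of the coefficients $A_j'$, $B_j'$.
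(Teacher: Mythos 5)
Your proof is correct and follows essentially the same route as the paper: identify one parity eigenspace with the irreducible module $U_\lambda$, deduce that the irreducible subquotient of the opposite-parity eigenspace containing the trivial $K$-type is equivalent to it, and conclude from the multiplicity-one $K$-type count that this subquotient is the whole module. The only difference is the external input at the middle step: where you invoke the subquotient theorem together with the irreducibility of the spherical principal series in the complementary range $0<\lambda<\rho$, the paper cites Kostant's theorem that an irreducible $(\gf,K)$-module containing the trivial $K$-type is determined up to equivalence by its infinitesimal character.
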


\begin{proof}
The assumption on $\lambda$ implies that $D_\lambda=\N_0$, and hence $U_\lambda$ is equal to
one of the two modules $\Ec_{\lambda,K}^{\even}(X)$ and $\Ec_{\lambda,K}^{\odd}(X)$,
depending on the parity of $\lambda-\rho$. For simplicity of exposition, let us
assume a specific parity, say even, of $\lambda-\rho$. Then $\Ec_{\lambda,K}^{\odd}(X)=U_\lambda$
is irreducible as seen in Section \ref{subsection Irreducibility}.

By Kostant's theorem \cite[Thm.~8]{Kos} an irreducible $(\gf,K)$-module, which contains the trivial
$K$-type, is uniquely determined up to equivalence by its infinitesimal character. Hence
$\Ec_{\lambda,K}^{\odd}(X)$ is equivalent to the irreducible subquotient of $\Ec_{\lambda,K}^{\even}(X)$
containing $\Ec_{\lambda,0}^{\even}(X)$. Since $\Ec_{\lambda,K}^{\odd}(X)$ and
$\Ec_{\lambda,K}^{\even}(X)$ contain the same $K$-types, all with multiplicity one, we conclude that
this subquotient is equal to $\Ec_{\lambda,K}^{\even}(X)$. The lemma is proved.
\end{proof}

\subsection{Conclusion} Assume $\re\lambda>0$.
Then $\Ec_{\lambda,K}(X)\cap L^2(X)=U_\lambda$ by  Lemma \ref{lemma square integrability}.
By definition $U_\lambda$ is non-zero if and only if
$\lambda-\rho\in\Z$. In Section \ref{subsection K-types in L2} we saw that
it consists of even functions on $X$ when $\lambda-\rho$ is odd, and vice versa.
Finally, irreducibility was seen in Section \ref{subsection Irreducibility}.
Thus the proof of Theorem \ref{thm discrete series} is complete.

Assume $\lambda\in\rho+\Z$ and $0<\lambda<\rho$. Then
$\Ec_{\lambda,K}^{\even}(X)$ and $\Ec_{\lambda,K}^{\odd}(X)$ are irreducible and
equivalent by Lemma \ref{lemma equivalent}.
One of them equals $U_\lambda$ and belongs to $L^2(X)$, whereas we have seen in
Lemma \ref{lemma square integrability}
that the other one is non-tempered.
This proves Theorem \ref{thm small lambda}.

\end{document}